\newtheorem{theorem}{Theorem}[section]
\newtheorem{lemma}[theorem]{Lemma}
\newtheorem{proposition}[theorem]{Proposition}
\newtheorem{remark}[theorem]{Remark}
\newcommand{\Bc}{\mathcal{B}}
\newcommand{\Ee}{\mathbb{E}}
\newcommand\R{\mathbb{R}}
\newcommand\Uc{\mathcal{U}}
\newcommand\indi[1]{{\mathbb{I}}_{{#1}}}
\def \Om{\Omega}
\def \Omb{\overline{\Omega}}
\def \om{\omega}
\def \omb{\bar{\omega}}
\def \w{\mathsf{w}}
\def \wb{\bar{\w}}
\def \Fc{\mathcal{F}}
\def \Gc{\mathcal{G}}
\def \F{\mathbb{F}}
\def \Fb{\overline{\F}}
\def \Fcb{\overline{\Fc}}
\def \x{\times}
\def \ox{\otimes}
\def \xb{\mathbf{x}}
\def \yb{\mathbf{y}}
\def \0{\mathbf{0}}
\def \E{\mathbb{E}}
\def \P{\mathbb{P}}
\def \Pb{\overline{\P}}
\def \Nc{\mathcal{N}}
\def \Lc{\mathcal{L}}
\def \etab{\bar \eta}
\newcommand{\rmi}{{\rm (i)$\>\>$}}
\newcommand{\rmii}{{\rm (ii)$\>\>$}}
\newcommand{\rmiii}{{\rm (iii)$\>\,    \,$}}
\newcommand{\rmiv}{{\rm (iv)$\>\>$}}
\newcommand{\rmv}{{\rm (v)$\>\>$}}
\title{A pseudo-Markov property for controlled diffusion processes}
\author{Julien CLAISSE\thanks{INRIA Sophia Antipolis,
2004 route des Lucioles, BP93, Sophia Antipolis, France.
Email: julien.claisse@inria.fr}
	\and Denis TALAY\thanks{INRIA Sophia Antipolis,
2004 route des Lucioles, BP93, Sophia Antipolis, France.
Email: denis.talay@inria.fr}
	\and Xiaolu TAN\thanks{CEREMADE,
Universit\'e Paris--Dauphine,
Place du Mar\'echal De Lattre De Tassigny,
75775 Paris Cedex 16, France.
Email:	tan@ceremade.dauphine.fr}
}
\date{12-01-15}
\begin{document}
\bibliographystyle{plain}

\maketitle

\abstract{{In this note, we propose two different approaches to
	rigorously justify a pseudo-Markov property
	for controlled diffusion processes
which is often (explicitly or implicitly) used to prove the
	dynamic programming principle
	in the stochastic control literature.
	The first approach develops a sketch of proof proposed by Fleming and
	Souganidis~\cite{fleming-souganidis}.
	The second approach is based on an enlargement of
	the original state space and a controlled martingale
	problem. We clarify some measurability and topological issues
raised by these two approaches.
}
}

\vspace{1mm}

{\bf Key words.} Stochastic control, martingale problem, dynamic programming principle, pseudo-Markov property.

\vspace{2mm}

\textbf{MSC 2010.} Primary 49L20; secondary 93E20.

\section{Introduction}

The dynamic programming principle (DPP) is a key
step in the mathematical analysis of optimal stochastic
control problems. For various formulations and approaches,
among hundreds of references, see, e.g.,
	 Fleming and Rishel \cite{FlemingRishel},
	 Krylov~\cite{krylov}, El Karoui \cite{ElKarouiSF},
	 Borkar \cite{Borkar}, Fleming and
	 Soner~\cite{fleming-soner}, Yong and
	 Zhou~\cite{yong-zhou},
and the more recent monographs by Pham~\cite{Pham} or
Touzi~\cite{Touzi}.

The DPP has a very intuitive meaning
but its rigorous proof for controlled diffusion processes
is a difficult issue in all cases where
the set of admissible controls is the set of
all progressively
measurable processes taking values in a given domain.
In particular this occurs in the stochastic analysis of
Hamilton-Jacobi-Bellman equations.
The highly technical approach to the DPP
developed by Krylov in~\cite{krylov} 
consists in establishing a continuity
property of expected cost functions w.r.t. controls
(see~\cite[Cor.3.2.8]{krylov})
in order to be in a position
to only prove the DPP in the simpler case where the controls
are piecewise constant.
When the controls belong to this restricted set,
the DPP is obtained by proving that
the corresponding controlled diffusions enjoy
a pseudo-Markov property which easily results
from the classical Markov property enjoyed by
uncontrolled diffusions.
See ~\cite[Lem.3.2.14]{krylov}.

In their context of stochastic differential games problems, Fleming and
Souganidis~\cite[Lem.1.11]{fleming-souganidis} propose
to avoid the control approximation procedure
by establishing the pseudo-Markov property without
restriction on the controls. Although natural,
this way to establish the DPP leads to various difficulties.
 In particular,
one needs to restrict the state space to the standard canonical
space. Unfortunately the proof of the crucial lemma~1.11
is only sketched.

Many other authors had implicitly or explicitly followed
the same way and partially justified
the pseudo-Markov property in the case of general
admissible controls: see, e.g., Tang and
Yong~\cite[Lem.2.2]{tang-yong}, Yong
and Zhou~\cite[Lem.4.3.2]{yong-zhou}, Bouchard
and Touzi~\cite{bouchard-touzi} and Nutz~\cite{NutzEJP}.
However, to the best of our knowledge, there is no full
and precise proof available in the literature except in
restricted situations (see, e.g., Kabanov and
Kl\"{u}ppelberg~\cite{kabanov-kluppelberg}).

When completing the elements of proof provided by Fleming and Souganidis and the other above mentioned authors,
we found several subtle technical difficulties to overcome.
For example,
the stochastic integrals and solutions to stochastic differential equations
are usually defined on a complete probability space 
equipped with a filtration satisfying the usual conditions;
on the other hand, the completion of the canonical 
$\sigma$--field is undesirable when
one needs to use a family of regular conditional probabilities.
See more discussion in Section~\ref{subsec:discussion}.

We therefore find it useful to provide a precise formulation
and a detailed full justification of the pseudo-Markov
property for general controlled diffusion processes,
and to clarify measurability and topological
issues,
particularly the importance of
setting stochastic control problems in the
canonical space rather to an abstract non Polish 
probability space
if the pseudo-Markov property is used to prove
the DPP.

We provide two proofs.
The first one develops the arguments sketched by
Fleming and Souganidis~\cite{fleming-souganidis}.
The second one is simpler in some aspects but requires
some more heavy notions; it is based on 
an enlargement
of the original state space and a  suitable controlled
martingale problem.

The rest of the paper is organized as follows.
In Section~\ref{sec:strong_formulation}, we introduce
the classical formulation of controlled SDEs.
We then precisely state the pseudo-Markov property
for controlled diffusions, which is our main result.
We discuss its use to establish the DPP.
To prepare its two proofs, technical lemmas are proven
in Section~\ref{sec:technical_lemmas}.
Finally, in Sections~\ref{sec:proof1} and~\ref{sec:proof2}
we provide our two proofs.

	\vspace{2mm}
	
	\noindent {\bf Notations.} We denote by
$\Om := C(\R^+, \R^d)$ the canonical space of continuous
functions from $\R^+$ to $\R^d$,
	which is a Polish space under the locally uniform convergence metric.
	$B$ denotes the canonical process and
$\F = (\Fc_s,\ s \geq 0)$ denotes the canonical filtration.
	The Borel $\sigma$-field of $\Omega$ is denoted by
	$\Fc$ and coincides with $\bigvee_{s \ge 0} \Fc_s$.
	We denote by $\P$ the Wiener measure on 
	$(\Om,\Fc)$ under which the canonical process $B$ is a $\F$-Brownian motion,
	and by $\Nc^{\P}$ the collection of all $\P$--negligible sets of $\Fc$,
	i.e. all sets $A$ included in some $N \in \Fc$ satisfying $\P(N) = 0$.
	We denote by $\F^{\P} = (\Fc^{\P}_s,\  s\ge 0)$ the $\P$--augmented filtration, that is,
	$\Fc^{\P}_s:=\Fc_s\vee\Nc^{\P}$.
We also set $\Fc^{\P} := \Fc \vee \Nc^{\P} = \bigvee_{s \ge 0} \Fc_s^{\P}$.

Finally,
	for all $(t,\om) \in \R^+ \x \Om$, the stopped path of $\om$ at time $t$ is denoted by
	$[\om]_t := (\om(t \wedge s), s\geq 0)$.

\section{A pseudo-Markov property for controlled diffusion processes}
\label{sec:strong_formulation}

\subsection{Controlled stochastic differential equations}
\label{subsec:ctrl_SDE}

	Let $U$ be a Polish space and $S_{d,d}$ be the collection of all square matrices of order~$d$.

Consider two Borel measurable functions
$b: \R^+ \x \Om \x U \to
\R^d$ and $\sigma: \R^+ \x \Om \x U\to S_{d,d}$
	satisfying the following condition:
	for all  $u \in U$,
	the processes $b(\cdot,\cdot,u)$ and $\sigma(\cdot,\cdot,u)$ are $\F$--progressively measurable or, equivalently,
$(t,\xb) \mapsto b(t,\xb,u)$  and
$(t,\xb) \mapsto \sigma(t,\xb,u)$ are $\Bc(\R^+)\ox\Fc$--measurable and		
	\begin{equation*}
		b(t, \xb,u) = b(t, [\xb]_t, u),
		~~
		\sigma(t, \xb,u) = \sigma(t, [\xb]_t, u),
		~~~
		\forall (t,\xb) \in \R^+ \x \Om
	\end{equation*}
(see Proposition~\ref{prop:pre_opt_pro} below).
In addition, we suppose that there exists $C > 0$ such that,
for all $(t, \xb, \yb,u) \in \R^+ \x \Om^2 \x U$,
		\begin{equation} \label{eq:Lip}
\begin{cases}
\displaystyle   \left|b (t,\xb,u)-b(t,\yb,u)\right|
		+
		\left\|\sigma(t,\xb,u)-\sigma(t,\yb,u)\right\|
		\leq
		C \sup_{0 \le s \le t} |\xb(s) - \yb(s)|, \\
\displaystyle   \sup_{{u \in U}} \left(\left|b(t,\xb,u)\right| + \left\|\sigma(t,\xb,u) \right\| \right)
		\leq
		C\left(1+ \sup_{0 \le s \le t} |\xb(s)| \right).
\end{cases}
	\end{equation}
	
	Denote by $\Uc$ the collection of all $U$--valued $\F$--predictable (or progressively measurable, see Proposition~\ref{prop:pre_opt_pro} below) processes.
	Then, given a control $\nu \in \Uc$,
	consider the controlled stochastic differential
    equation (SDE)
	\begin{equation}\label{eq:SDE}
		dX_s ~=~ b(s,X,\nu_s)ds ~+~ \sigma(s, X,\nu_s)dB_s.
	\end{equation}
	As $B$ is still a Brownian motion on $(\Om,\Fc^{\P},\P,\F^{\P})$ (see, e.g., \cite[Thm.2.7.9]{karatzas-shreve}) we may and do consider~(\ref{eq:SDE})
	on this filtered probability space.
		
	Under Condition \eqref{eq:Lip}, for all control $\nu \in \Uc$ and initial condition $(t,\xb) \in \R^+\times \Om$,
	there exists a unique (up to indistinguishability)
	continuous and $\F^{\P}$--progressively measurable process $X^{t,\xb,\nu}$ in $(\Om, \Fc^{\P}, \P)$,
	such that, for all $\theta$ in $\R^+$,
	\begin{equation}\label{eq:SDE_Int}
		X^{t,\xb,\nu}_{\theta}
		~=~
		\xb(t\wedge\theta)
		+
		\int_t^{t\vee\theta} b(s,X^{t,\xb,\nu},\nu_s)ds
		+
		\int_t^{t\vee\theta}\sigma(s, X^{t,\xb,\nu},\nu_s)dB_s,
		\qquad\P-\mathrm{a.s.}
	\end{equation}
	
	Our two proofs of a pseudo-Markov property for $X^{t,\xb,\nu}$ use an identity in law
	which makes it necessary to reformulate the controlled SDE~\eqref{eq:SDE} as a standard SDE.
	
	Given $(t,\nu)\in\R^+\x\Uc$, we define $\bar{b}^{t,\nu}:\R^+\x \Om^2\to\R^{2d}$ and $\bar{\sigma}^{t,\nu}:\R^+\x \Om^2\to S_{2d,d}$ as follows : for all $s\in\R^+$, $\omb=(\om,\om')\in\Om^2$,
	 \begin{align*}
		 \bar{b}^{t,\nu}(s, \omb)
		 ~:=~
		 \begin{pmatrix}
			 0 \\
			 b(s,\om',\nu_s(\om))\indi{s \ge t}
		 \end{pmatrix}
		 ,	~~~
		 \bar{\sigma}^{t,\nu}(s,\omb)
		 ~:=~
		 \begin{pmatrix}
			 \text{Id}_d \\
			 \sigma(s, \om', \nu_s(\om))\indi{s \ge t}
		 \end{pmatrix}.
	 \end{align*}
	Then, given $\xb$ in $\Om$, $Y^{t,\xb,\nu}:=(B,X^{t,\xb,\nu})$ is the unique (up to indistinguishability)
	continuous and $\F^{\P}$--progressively measurable process in $(\Om, \Fc^{\P}, \P)$
	such that, for all $\theta$ in $\R^+$,
	 \begin{equation}\label{eq:SDE_Int_bis}
		 Y^{t,\xb,\nu}_{\theta}
		 ~=~
		 \begin{pmatrix}
			0\\
			\xb(t\wedge\theta)\\
		 \end{pmatrix}
		 +
		 \int_0^{\theta} \bar{b}^{t,\nu}(s,Y^{t,\xb,\nu})ds
		 +
		 \int_0^{\theta} \bar{\sigma}^{t,\nu}(s,Y^{t,\xb,\nu})dB_s,
		 \quad\P\mbox{--a.s.}
	 \end{equation}

Define
pathwise uniqueness and uniqueness in law
for standard SDEs as, respectively, in
Rogers and Williams~\cite[Def.V.9.4]{rogers-williams-II}
and \cite[Def.V.16.3]{rogers-williams-II}).
In view of the celebrated Yamada and Wanabe's Theorem
(see, e.g., \cite[Thm.V.17.1]{rogers-williams-II}),
the former implies the latter and the SDE~\eqref{eq:SDE_Int_bis} satisfies uniqueness in law.

	\begin{remark} \label{rem:formulation}
		\rmi The stochastic integrals and the solution to the SDE \eqref{eq:SDE} are defined as continuous and adapted to the augmented filtration $\F^{\P}$.

\noindent \rmii In an abstract probability space equipped
with a standard Brownian motion $W$, our formulation is
equivalent to
choosing controls as predictable processes w.r.t. the filtration
generated by $W$. Indeed such a process  can always be
represented as
$\nu(W_{\cdot})$ for some $\nu$ in $\Uc$ (see
Proposition~\ref{prop:abstract_space} below). This fact
plays a crucial role in our justification of the pseudo-Markov
property for controlled diffusions.
In their analyses of stochastic control problems,
Krylov~\cite{krylov} or Fleming and
Soner~\cite{fleming-soner} do not need to use
this property and deal with controls adapted to
an arbitrary filtration.

		\noindent \rmiii
		We could have defined controls as $\F^{\P}$--predictable processes since any $\F^{\P}$--predictable process is indistinguishable from an $\F$--predictable process
		(see Dellacherie and Meyer~\cite[Thm.IV.78 and Rk.IV.74] {dellacherie-meyer}). Notice also that the notions of predictable, optional and progressively measurable process w.r.t. $\F$ coincide (see Proposition~\ref{prop:pre_opt_pro} below).
		
	\end{remark}

\subsection{A pseudo-Markov property for controlled diffusion processes}

	Before formulating our main result, we introduce
the class of the shifted control processes constructed
by concatenation
	of paths: for all $\nu$ in $\Uc$ and $(t,\w)$ in $\R^+ \x \Om$ we set
	\begin{equation*}
		\nu_s^{t,\w}(\om) := \nu_s(\w \ox_t \om), \quad \forall (s,\om) \in \R^+ \x \Om,
	\end{equation*}
	where $\w \ox_t \om$ is the function in ${\Om}$ defined by
	\begin{equation*}
		(\w \ox_t \om)(s) ~:=~
		\begin{cases}
			\w(s), & \text{if } 0\leq s\leq t, \\
			\w(t)  +\om(s) - \om(t), & \text{if } s\geq t.
		\end{cases}
	\end{equation*}

	Our main result is the following
   pseudo-Markov property
	for controlled diffusion processes. As already
mentioned, it is most often considered as classical or obvious;
however, to the best of our knowledge, its precise statement
and full proof are original.

	\begin{theorem}\label{theo:main}
	Let $\Phi : \Om \to \R^+$ be a bounded Borel measurable function
	and let $J$ be defined as
	\begin{equation*}
 		J \big(t, \xb,\nu \big)
		~:=~
		\E^{\P} \big[ \Phi \big(X^{t,\xb,\nu }\big)\big],
		~~~
		\forall (t,\xb, \nu) \in \R^+ \x \Om \x \Uc.
	\end{equation*}
Under Condition~(\ref{eq:Lip}),
		for all $(t,\xb,\nu) \in \R^+\times \Om \times\Uc$
		and $\F^{\P}$--stopping time $\tau$ taking values in $[t,+\infty)$ we have
		 \begin{equation} \label{eq:conditionning}
			 \E^{\P}\left[ \Phi \left(X^{t,\xb,\nu}\right)\Big|\,\Fc^{\P}_\tau\right](\om)
			 ~=~
			 J\left(\tau(\om), \left[X^{t,\xb,\nu}\right]_{\tau}(\om), \nu^{\tau(\om),\om}\right),
			 \quad
			 \P(d\om)-\mathrm{a.s.}
		 \end{equation}
	\end{theorem}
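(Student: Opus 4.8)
The plan is to condition through a regular conditional probability distribution (r.c.p.d.) of the Wiener measure and to identify the conditioned law via the uniqueness in law of the reformulated SDE~\eqref{eq:SDE_Int_bis}. Since the augmented $\sigma$-field $\Fc^{\P}_\tau$ is ill-suited for conditioning (the negligible sets $\Nc^{\P}$ destroy the standard-Borel structure one needs for r.c.p.d.), I would first pass to the raw canonical $\sigma$-field. Using that an $\F^{\P}$-stopping time is $\P$-a.s. equal to an $\F$-stopping time and that $X^{t,\xb,\nu}$ admits an $\F$-progressively measurable version --- both of which I expect to be supplied by the technical lemmas of Section~\ref{sec:technical_lemmas} --- it suffices to prove \eqref{eq:conditionning} with $\Fc_\tau$ in place of $\Fc^{\P}_\tau$ for a raw stopping time $\tau$; the two conditional expectations then coincide $\P$-a.s. because the augmentation only adds $\P$-negligible sets.

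Next, let $(\P^\om)_{\om\in\Om}$ be an r.c.p.d. of $\P$ given $\Fc_\tau$, which exists because $(\Om,\Fc)$ is standard Borel. For $\P$-a.e. $\om$ I would establish the two structural properties of $\P^\om$ on which everything rests: first, that the past is frozen, $[B]_{\tau(\om)} = [\om]_{\tau(\om)}$ holds $\P^\om$-a.s. (a Galmarino-type argument); second, that by the strong Markov property of Brownian motion the shifted increments $s \mapsto B_{\tau(\om)+s} - B_{\tau(\om)}$ form a Brownian motion under $\P^\om$. These two facts are precisely what the concatenation map $\w \ox_t \om$ and the shifted control $\nu^{\tau(\om),\om}$ are designed to encode.

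The heart of the argument is then to identify the conditional law of $X^{t,\xb,\nu}$. I would show that, for $\P$-a.e. $\om$, the process $X^{t,\xb,\nu}$ considered under $\P^\om$ solves, on $[\tau(\om),\infty)$, the very SDE~\eqref{eq:SDE_Int} that defines $X^{\tau(\om),[X^{t,\xb,\nu}]_\tau(\om),\nu^{\tau(\om),\om}}$ under $\P$, with the frozen value $[X^{t,\xb,\nu}]_\tau(\om)$ as initial condition and the shifted coefficients. Invoking the uniqueness in law of \eqref{eq:SDE_Int_bis} then yields
\begin{equation*}
\mathrm{Law}\big(X^{t,\xb,\nu}\,\big|\,\P^\om\big) = \mathrm{Law}\big(X^{\tau(\om),[X^{t,\xb,\nu}]_\tau(\om),\nu^{\tau(\om),\om}}\,\big|\,\P\big).
\end{equation*}
Integrating the bounded Borel function $\Phi$ against both sides gives $\E^{\P^\om}[\Phi(X^{t,\xb,\nu})] = J(\tau(\om),[X^{t,\xb,\nu}]_\tau(\om),\nu^{\tau(\om),\om})$, and by the defining property of the r.c.p.d. the left-hand side equals $\E^{\P}[\Phi(X^{t,\xb,\nu})\mid\Fc_\tau](\om)$ for $\P$-a.e. $\om$, which is the claimed identity. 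Along the way I would also verify that the right-hand side is genuinely $\Fc_\tau$-measurable, i.e. that $(t,\xb,\nu)\mapsto J(t,\xb,\nu)$ and $\om\mapsto(\tau(\om),[X^{t,\xb,\nu}]_\tau(\om),\nu^{\tau(\om),\om})$ compose measurably.

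The step I expect to be the main obstacle is the law identification: stochastic integrals do not transform in an elementary way under conditioning, and they are only defined up to $\P$-null sets that may depend on $\om$. Making rigorous the claim that $\int_{\tau}^{\cdot}\sigma\,dB$ computed under $\P$ coincides, after the shift, with the integral defining the restarted diffusion under $\P^\om$ requires a careful localization together with the freezing and strong-Markov properties above, as well as joint measurability in $\om$ of all the objects involved. This is exactly the point left implicit in the sketch of Fleming and Souganidis, and it is the reason why the reformulation as the standard SDE~\eqref{eq:SDE_Int_bis}, whose uniqueness in law is available off the shelf, is so useful here.
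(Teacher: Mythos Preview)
Your proposal is essentially the paper's first proof (Section~\ref{sec:proof1}): reduce to a raw $\F$--stopping time via Lemma~\ref{lemma:tau_eta}, take an r.c.p.\ of $\P$ given $\Fc_\eta$, freeze the past, identify the stochastic integral under the conditioned measure (the paper isolates this as Lemma~\ref{lemm:integral_diff_P}), and conclude by uniqueness in law of~\eqref{eq:SDE_Int_bis}. Two small corrections: the technical lemmas do not furnish an $\F$--progressive version of $X^{t,\xb,\nu}$ --- instead Lemma~\ref{lemm:predictable} shows the original $\F^{\P}$--adapted process is $\F^{\P_\w}$--adapted for $\P$--a.a.~$\w$, which is what one actually needs; and you should not plan to verify joint measurability of $J$ directly, since the paper explicitly notes (Remark~\ref{rem:strong_cond}\rmiv) that this is unclear --- the $\Fc^{\P}_\tau$--measurability of the right-hand side is obtained \emph{a posteriori} from its equality with the conditional expectation.
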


	\begin{remark}\label{rem:strong_cond}
		\rmi Our formulation slightly differs from Fleming and Souganidis~\cite{fleming-souganidis}
		who consider deterministic times $\tau=s$
		and conditional expectations given the
		non-augmented $\sigma$--field $\Fc_s$.

		\noindent \rmii We work with the augmented $\sigma$--fields $\F^{\P}$  in order to make
		the solutions $X^{t,\xb,\nu}$ adapted.

\noindent \rmiii One motivation
to consider stopping times $\tau$ rather than deterministic
times is that, to study viscosity solutions
	        to Hamilton-Jacobi-Bellman equations, one
often uses  first exit times of $X^{t,\xb, \nu}$ from Borel
subsets of $\R^d$.

		\noindent \rmiv It is not clear to us whether the function $J$ is measurable w.r.t. all its arguments.
		However Theorem~\ref{theo:main} implies that
the r.h.s. of~\eqref{eq:conditionning} is a $\Fc^{\P}_{\tau}$--measurable random variable.
	\end{remark}

	\noindent
	In Section \ref{subsec:discussion}
	we discuss technical subtleties hidden in
Equality~(\ref{eq:conditionning}) and
	point out some of the difficulties which motivate us to
propose a detailed proof. Before further discussions,
	we show how the pseudo-Markov property is used to prove parts of the DPP.
	
        Consider the value function
	\begin{equation} \label{eq:def_V}
		V(t,\xb)
		~:=~
		\sup_{\nu \in \Uc} \E^{\P}\left[ \Phi( X^{t,\xb,\nu}) \right],
		~~~
		\forall (t,\xb) \in \R^+ \x \Om.
	\end{equation}

	\begin{proposition} \label{prop:dpp_le}
		\rmi	For all $(t,\xb)$ in $\R^+ \x \Om$, it holds that
		\begin{equation} \label{eq:value_fct_equiv}
					V(t,\xb) ~=~ \sup_{\mu \in \Uc^t} \E^{\P}\left[ \Phi(X^{t,\xb,\mu}) \right],
		\end{equation}
		where $\Uc^t$ denotes the set of all $\nu \in \Uc$ which are independent of $\Fc_t$ under $\P$.
		
		\vspace{1mm}
		
		\noindent \rmii Suppose in addition that the value function $V$ is measurable.
		Then, for all $(t,\xb)\in\R^+ \x \Om$, $\varepsilon>0$, there exists
		$\nu\in\Uc$ such that for all $\F^{\P}$--stopping times $\tau$ taking values in $[t,\infty)$, one has
		\begin{equation} \label{eq:dpp_le}
			V(t,\xb) - \varepsilon ~\leq~ \E^{\P} \left[ V\left(\tau, \left[X^{t,\xb,\nu}\right]_{\tau}\right) \right].
		\end{equation}
	\end{proposition}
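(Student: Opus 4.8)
The plan is to deduce both statements directly from the pseudo-Markov property of Theorem~\ref{theo:main}, reading it at the deterministic time $\tau = t$ for part~\rmi and at a general stopping time for part~\rmii; the probabilistic content is supplied entirely by that theorem, so what remains is of a measure-theoretic nature.

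For \rmi the inclusion $\Uc^t \subseteq \Uc$ yields the inequality ``$\geq$'' for free, so only ``$\leq$'' needs proof. I would fix $\nu \in \Uc$ and apply \eqref{eq:conditionning} with $\tau \equiv t$. Since both stochastic integrals in \eqref{eq:SDE_Int} vanish for $\theta \le t$, one has $X^{t,\xb,\nu}_\theta = \xb(\theta)$ there, so the stopped path $[X^{t,\xb,\nu}]_t = [\xb]_t$ is deterministic; moreover $X^{t,\xb,\mu} = X^{t,[\xb]_t,\mu}$ because the coefficients see the path only through its value stopped at $t$, whence $J(t,\xb,\cdot) = J(t,[\xb]_t,\cdot)$. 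Theorem~\ref{theo:main} then reads
$$\E^{\P}\big[\Phi(X^{t,\xb,\nu}) \,\big|\, \Fc^{\P}_t\big](\om) = J\big(t,\xb,\nu^{t,\om}\big), \qquad \P(d\om)\text{--a.s.},$$
and taking expectations gives $\E^{\P}[\Phi(X^{t,\xb,\nu})] = \int_\Om J(t,\xb,\nu^{t,\om})\,\P(d\om)$. Granting that $\nu^{t,\om} \in \Uc^t$ for $\P$--a.e.\ $\om$, the integrand is bounded by $\sup_{\mu\in\Uc^t}J(t,\xb,\mu)$, hence so is the integral, and taking the supremum over $\nu \in \Uc$ yields ``$\leq$''.

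The crux of \rmi is therefore to verify $\nu^{t,\om} \in \Uc^t$. For $s \le t$ the concatenation $\w \ox_t \om'$ agrees with $\w$ on $[0,t]$ whatever $\om'$, so $\nu^{t,\om}_s$ is non-random in $\om'$; for $s > t$ it depends on $\om'$ only through the increments $\om'(r) - \om'(t)$, $t \le r \le s$. Under the Wiener measure these increments generate a $\sigma$--field independent of $\Fc_t$, which is precisely the required independence. That $\nu^{t,\om}$ is itself an admissible control and that $\om \mapsto \nu^{t,\om}$ is measurable are facts I would quote from the technical lemmas of Section~\ref{sec:technical_lemmas}, while the measurability of the conditional expectation above is guaranteed by Theorem~\ref{theo:main} itself (Remark~\ref{rem:strong_cond}\,\rmiv). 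This independence check is the main obstacle in \rmi.

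For \rmii I would use the definition \eqref{eq:def_V} of $V$ to pick, for the given $(t,\xb)$ and $\varepsilon$, a single control $\nu \in \Uc$ with $\E^{\P}[\Phi(X^{t,\xb,\nu})] \ge V(t,\xb) - \varepsilon$; this $\nu$ works for every $\tau$. Applying \eqref{eq:conditionning} to this $\nu$ and an arbitrary $\F^{\P}$--stopping time $\tau \ge t$, and using that for each $\om$ the shifted control $\nu^{\tau(\om),\om}$ lies in $\Uc$ so that $J(\tau(\om),[X^{t,\xb,\nu}]_\tau(\om),\nu^{\tau(\om),\om}) \le V(\tau(\om),[X^{t,\xb,\nu}]_\tau(\om))$ by definition of $V$, one obtains the $\P$--a.s.\ pointwise bound
$$\E^{\P}\big[\Phi(X^{t,\xb,\nu}) \,\big|\, \Fc^{\P}_\tau\big] \le V\big(\tau,[X^{t,\xb,\nu}]_\tau\big).$$
Here the assumed measurability of $V$ is exactly what turns the right-hand side into a bona fide random variable one may integrate. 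Taking $\P$--expectations and applying the tower property on the left gives $V(t,\xb) - \varepsilon \le \E^{\P}[\Phi(X^{t,\xb,\nu})] = \E^{\P}[V(\tau,[X^{t,\xb,\nu}]_\tau)]$, which is \eqref{eq:dpp_le}. The only genuine difficulty in \rmii is thus measurability: both the hypothesis that $V$ be measurable and the verification that the conditioning identity may legitimately be integrated.
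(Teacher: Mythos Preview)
Your proof is correct and follows the same route as the paper's: both parts are obtained by reading Theorem~\ref{theo:main} at $\tau\equiv t$ for~\rmi and at a general stopping time after choosing an $\varepsilon$--optimal control for~\rmii, then bounding $J$ by $V$ inside the integral. Your additional justification that $\nu^{t,\om}\in\Uc^t$ (via measurability with respect to the post--$t$ increments of the canonical process) is exactly the point the paper asserts without proof; note only that this membership holds for \emph{every} $\om\in\Om$, not merely $\P$--a.e., and that the admissibility of $\nu^{t,\om}$ as an $\F$--predictable process is not actually stated in the lemmas of Section~\ref{sec:technical_lemmas} but is an elementary check from the continuity of $\om'\mapsto\om\ox_t\om'$ and Proposition~\ref{prop:pre_opt_pro}.
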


	\proof \rmi For \eqref{eq:value_fct_equiv}, it is enough to prove that
	\begin{equation*}
			V(t,\xb) ~\leq~ \sup_{\mu \in \Uc^t} \E^{\P}\left[ \Phi(X^{t,\xb,\mu}) \right],
	\end{equation*}
	since the other inequality results from $\Uc^t\subset \Uc$.
	Now, let $\nu$ be an arbitrary control in $\Uc$.
	Apply Theorem \ref{theo:main} with $\tau \equiv t$. It comes:
		\begin{equation*}
		J \big(t, \xb,\nu \big) ~=~
\int_{\Om} {J\left(t,[\xb]_t,\nu^{t,\om}\right)} \,\P(d\om)
~=~\int_{\Om} {J\left(t,\xb,\nu^{t,\om}\right)} \,\P(d\om).
	\end{equation*}
	Then \eqref{eq:value_fct_equiv} follows from the fact that, for all fixed $\om\in\Om$,
	the control $\nu^{t,\om}$ belongs to $\Uc^t$.
	
	\vspace{1mm}
	
	\noindent \rmii Notice that, for all $\varepsilon > 0$,
	one can choose an $\varepsilon$--optimal control $\nu$, that is,
	$$ 	V(t,\xb) - \varepsilon ~\leq~ J(t, \xb, \nu). $$
	Apply Theorem \ref{theo:main} with this control $\nu$. It comes:
	\begin{equation*}
		V(t,\xb) - \varepsilon
		~\leq~
		\int_{\Om} {J\left(\tau(\om), \left[X^{t,\xb,\nu}\right]_{\tau}(\om), \nu^{\tau(\om), \om}\right)} \,\P(d\om)
		~\le~
		\E^{\P} \Big[ V\left(\tau, \left[X^{t,\xb,\nu}\right]_{\tau}\right) \Big].	
	\end{equation*}

	\qed

	\begin{remark}
		Inequality~\eqref{eq:dpp_le} is the `easy' part of the DPP.
		Equality~\eqref{eq:value_fct_equiv}, combined with the continuity of the value function,
		is a key step in classical proofs of the `difficult' part of the DPP, that is:
		for all control $\nu$ in $\Uc$ and all $\F^{\P}$--stopping time $\tau$ taking values in $[t,\infty)$, one has
		\begin{equation*}
			V(t,\xb) ~\geq~ \E^{\P} \Big[ V\left(\tau, \left[X^{t,\xb,\nu}\right]_{\tau}\right) \Big].
		\end{equation*}
	\end{remark}

\subsection{Discussion on Theorem \ref{theo:main}}
\label{subsec:discussion}

\paragraph{On the hypothesis on $b$ and $\sigma$.}

    In Theorem \ref{theo:main} the coefficients $b$ and $\sigma$ are supposed
    to satisfy Condition~\eqref{eq:Lip} and thus the solutions to the SDE~\eqref{eq:SDE_Int_bis} are pathwise unique.
However,  we only need uniqueness in law in our
second proof of Theorem~\ref{theo:main} which is based
on controlled martingale problems related to the
SDE~\eqref{eq:SDE_Int_bis}. Hence Condition~\eqref{eq:Lip}
can be relaxed to weaker conditions which imply weak
uniqueness. However we have not followed this way here to
avoid too heavy notations and to remain within the classical
family of controlled SDEs with pathwise unique solutions.

\paragraph{Intuitive meaning and measurability issues.}

	The intuitive meaning of Theorem \ref{theo:main} is as follows.
	Notice that $X^{t,\xb,\nu}$ satisfies: for all $\theta\in\R^+$,
	\begin{equation} \label{eq:faux_flow}
		X^{t,\xb, \nu}_{\theta}
		~=~
		X^{t,\xb,\nu}_{\tau \wedge \theta}
		+ \int_{\tau}^{\tau \vee \theta} b(s, X^{t,\xb,\nu}, \nu_s) ds
		+ \int_{\tau}^{\tau \vee \theta} \sigma(s, X^{t,\xb,\nu}, \nu_s) dB_s,\quad \P-\mathrm{a.s.}
	\end{equation}
	If a regular conditional probability~(r.c.p.)
	$(\P_{\w}, \w \in \Om)$ of $\P$ given $\Fc^{\P}_{\tau}$
	were to exist (see, e.g.,  Karatzas and
 Shreve~\cite[Sec.5.3.C]{karatzas-shreve}),
 then Equation~\eqref{eq:faux_flow} would be satisfied
	$\P_{\w}$--almost surely and, in addition, we would have
	\begin{equation}\label{eq:faux_prop}
	 \P_{\w} \Big( \tau = \tau(\w), \left[X^{t,\xb,\nu}\right]_{\tau} = \left[X^{t,\xb,\nu}\right]_{\tau}(\w), \nu = \nu^{\tau(\w),\w} \Big) = 1.
	\end{equation}
	Hence, Theorem \ref{theo:main} would follow from the uniqueness in law of solutions to
	Equation~\eqref{eq:SDE_Int_bis}.
Unfortunately the situation is not so simple for the following
reasons.
	First,
	since $\Fc^{\P}_{\tau}$ is complete,
	a r.c.p. of $\P$ given $\Fc^{\P}_{\tau}$ does not exist (see~\cite[Thm.10]{faden}).
	Second, even if $\tau$ were a $\F$--stopping time and $(\P_{\w}, \w \in \Om)$ were
	a r.c.p. of $\P$ given $\Fc_{\tau}$, then
	$\P_{\w}$ would be defined as a measure on $\Fc$
	whereas $X^{t,\xb,\nu}$ is adapted to the $\P$--augmented filtration $\F^{\P}$: hence,
Equality~\eqref{eq:faux_prop} needs to be
	rigorously justified.
	Third, one needs to check that the stochastic integral in \eqref{eq:faux_flow},
	which is constructed under $\P$, agrees with 
	the stochastic integral constructed under $\P_{\w}$ for $\P$-a.a. $\w$.


\section{Technical Lemmas}
\label{sec:technical_lemmas}

	In order to solve the measurability issues mentioned above, we establish three technical lemmas which, to the best of our knowledge, are not available in the literature.

The first lemma improves the classical Dynkin theorem
which states that, given an arbitrary probability space
and arbitrary filtration $(\mathcal{H}_t)$,
a stopping time w.r.t. to the augmented
filtration of $(\mathcal{H}_t)$
is a.s. equal to a  stopping time w.r.t. $(\mathcal{H}_{t+})$:
see, e.g., \cite[Thm.II.75.3]{rogers-williams-I}.
The improvement here is allowed by the fact that we are dealing
		with the augmented Brownian filtration 
		(more generally, we could deal with the natural
		augmented filtration generated by a Hunt process
		with continuous paths: see Chung and Williams~\cite[Sec.2.3,p.30-32]{chung-williams}).

	\begin{lemma} \label{lemma:tau_eta}
		Let $\tau$ be a $\F^{\P}$--stopping time. There exists a $\F$--stopping time $\eta$ such that
		\begin{equation} \label{eq:equiv_stoptime}
			\P \big( \tau = \eta \big) = 1
			~~\mbox{and}~~
			\Fc^{\P}_{\tau} ~=~ \Fc_{\eta}  ~\vee~ \Nc^{\P}.
		\end{equation}
	\end{lemma}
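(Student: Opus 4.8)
The plan is to first reduce to an $\F_+$-stopping time by a general argument, and then exploit the special structure of the augmented Brownian filtration to upgrade it to a genuine $\F$-stopping time. Applying the classical Dynkin theorem to $\F$ (whose usual augmentation is exactly $\F^{\P}$, since for Brownian motion $\Fc_{t+}^{\P}=\Fc_t^{\P}$), I obtain an $\F_+$-stopping time $\rho$ with $\P(\tau=\rho)=1$; recall this means $\{\rho\le t\}\in\Fc_{t+}$ for all $t$, equivalently $\{\rho<t\}\in\Fc_t$ for all $t$. The whole content beyond Dynkin is then to replace $\rho$, on a single $\P$-null set, by a genuine $\F$-stopping time $\eta$, i.e. one satisfying $\{\eta\le t\}\in\Fc_t$ for \emph{every} $t$, while keeping $\eta=\rho$ $\P$-a.s.

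For this upgrade I would work on the canonical space through Galmarino's test, which characterizes $\F$-stopping times $\eta$ by the pathwise stability condition that $\eta(\om)\le t$ together with $[\om]_t=[\om']_t$ forces $\eta(\om')\le t$. The difficulty is entirely concentrated on the boundary events $\{\rho=t\}=\{\rho\le t\}\setminus\{\rho<t\}$: because $\{\rho<t\}\in\Fc_t$ already, $\rho$ can fail to be an $\F$-stopping time only through these sets, which a priori sit in $\Fc_{t+}$ but not in $\Fc_t$. Only countably many levels $t$ can be atoms of the law of $\rho$ (those with $\P(\rho=t)>0$); at each such level the right-continuity $\Fc_{t+}^{\P}=\Fc_t^{\P}$ furnishes an $\Fc_t$-set agreeing with $\{\rho=t\}$ up to a $\P$-null set, whereas at all non-atom levels $\{\rho=t\}$ is itself null. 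Gathering these countably many exceptional null sets into a single $N\in\Nc^{\P}$ and redefining $\rho$ on $N$ so as to push the exceptional paths onto an $\F$-detectable time, I would obtain $\eta$ with $\eta=\rho=\tau$ $\P$-a.s. The main obstacle is verifying that the redefined time genuinely satisfies Galmarino's condition simultaneously for all $t$, rather than merely reproducing an $\F_+$-object: indeed, any construction of the form $\inf\{q\in\mathbb{Q}:\om\in E_q\}$ with $E_q\in\Fc_q$ yields a right-continuous, hence only $\F_+$, time, so reaching the non-augmented $\Fc_t$ is impossible without the pathwise structure. It is precisely here that the canonical-space setting and the right-continuity of the augmented Brownian filtration are indispensable, and where the classical Dynkin theorem stops.

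It then remains to prove the identity $\Fc_\tau^{\P}=\Fc_\eta\vee\Nc^{\P}$. Since $\tau=\eta$ $\P$-a.s. and every $\Fc_t^{\P}$ contains $\Nc^{\P}$, the sets $\{\tau\le t\}$ and $\{\eta\le t\}$ differ by null sets, so the defining conditions of $\Fc_\tau^{\P}$ and of $\Fc_\eta^{\P}:=\{A\in\Fc^{\P}:A\cap\{\eta\le t\}\in\Fc_t^{\P}\ \forall t\}$ coincide, giving $\Fc_\tau^{\P}=\Fc_\eta^{\P}$. The inclusion $\Fc_\eta\vee\Nc^{\P}\subseteq\Fc_\eta^{\P}$ is immediate from $\Fc_t\subseteq\Fc_t^{\P}$ and $\Nc^{\P}\subseteq\Fc_\eta^{\P}$. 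For the reverse inclusion I would take $A\in\Fc_\eta^{\P}$, use $\Fc^{\P}=\Fc\vee\Nc^{\P}$ to find $A_0\in\Fc$ with $A\triangle A_0\in\Nc^{\P}$, and then, using the dyadic approximations $\eta_n:=2^{-n}\lceil 2^n\eta\rceil\downarrow\eta$ of the $\F$-stopping time $\eta$ together with $\Fc_s^{\P}=\Fc_s\vee\Nc^{\P}$, show that $A_0$ can be modified on a null set into a genuine element of $\Fc_\eta$; this yields $A\in\Fc_\eta\vee\Nc^{\P}$ and closes the argument. This last part is more routine than the upgrade step, being the standard fact that the stopping-time $\sigma$-field of the augmented filtration is the augmentation of the non-augmented stopping-time $\sigma$-field.
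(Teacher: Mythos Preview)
Your upgrade step from the $\F_+$-stopping time $\rho$ to a genuine $\F$-stopping time has a gap that your own remarks correctly flag but do not close. The atom/non-atom dichotomy cannot work as stated: at a non-atom level $t$ the set $\{\rho=t\}$ is $\P$-null, but this does \emph{not} place it in the raw $\sigma$-field $\Fc_t$, which is what you need for $\{\rho\le t\}=\{\rho<t\}\cup\{\rho=t\}\in\Fc_t$. There are uncountably many such levels, so their null sets cannot be swept into a single $N\in\Nc^{\P}$; concretely, if $\rho$ has a diffuse law your construction produces $N=\emptyset$ and hence $\eta=\rho$, which is still only an $\F_+$-stopping time. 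You acknowledge that ``reaching the non-augmented $\Fc_t$ is impossible without the pathwise structure'', but no pathwise argument is actually supplied, so the first assertion of the lemma remains unproved.

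The paper bypasses this difficulty by exploiting a stronger feature of the augmented Brownian filtration than mere right-continuity: every $\F^{\P}$-optional process is $\F^{\P}$-\emph{predictable} (Revuz--Yor, Cor.~V.3.3), so $\tau$ is a predictable time. Dellacherie--Meyer (Thm.~IV.78) then directly furnishes an $\F$-predictable time $\eta$ with $\P(\tau=\eta)=1$; being $\F$-predictable, $\eta$ is in particular an $\F$-stopping time, with no upgrade from $\F_+$ required. The same device handles the hard inclusion $\Fc_\tau^{\P}\subseteq\Fc_\eta\vee\Nc^{\P}$ more cleanly than your dyadic sketch: for $A\in\Fc_\tau^{\P}$ one writes $\indi{A}=X_\tau$ with $X$ an $\F^{\P}$-predictable process (Dellacherie--Meyer, Thm.~IV.64), replaces $X$ by an indistinguishable $\F$-predictable process $Y$, and reads off $\indi{A}=Y_\eta$ $\P$-a.s.\ with $Y_\eta$ $\Fc_\eta$-measurable. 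Your arguments for $\Fc_\tau^{\P}=\Fc_\eta^{\P}$ and for the easy inclusion are fine; it is precisely at the two places where you reach for ad hoc constructions that the predictable-process route is decisive.
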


	\proof
\textsl{First step.}
	As $\F^{\P}$ is the augmented Brownian filtration,
	all $\F^{\P}$--optional process is predictable and hence
	all $\F^{\P}$--stopping time is predictable (see, e.g., Revuz and
Yor \cite[Cor.V.3.3]{revuz-yor}).
	It follows from Dellacherie and Meyer \cite[Thm.IV.78]{dellacherie-meyer} that
	there exists a $\F$--stopping time (actually, a predictable time) $\eta$ such that
	\begin{equation*}
		\P( \tau = \eta) ~=~ 1.
	\end{equation*}

\noindent
\textsl{Second step.}
We now prove $\Fc_{\eta} \vee \Nc^{\P} \subseteq \Fc^{\P}_{\tau}$.
	First, since $\Fc^{\P}_0 \subset \Fc^{\P}_{\tau}$,  we
have $\Nc^{\P} \subset \Fc^{\P}_{\tau}$.
	Second, $\eta$ is a $\F^{\P}$--stopping time.
	Since $\tau = \eta, ~\P-\mathrm{a.s.}$ and $\Fc^{\P}_{\tau}$, $\Fc^{\P}_{\eta}$ are both $\P$--complete,
	one has $\Fc^{\P}_{\tau} = \Fc^{\P}_{\eta}$, from which
	$\Fc_{\eta} \subset \Fc^{\P}_{\eta} = \Fc^{\P}_{\tau}$.

	\vspace{1mm}

	\noindent
\textsl{Last step.}
It therefore only remains to prove $\Fc^{\P}_{\tau} \subset \Fc_{\eta} \vee \Nc^{\P}$.
	Let $A \in \Fc^{\P}_{\tau}$. In view of
~\cite[Thm.IV.64]{dellacherie-meyer}, there exists a $\F^{\P}$--optional (or equivalently $\F^{\P}$--predictable) process $X$ such that $\indi{A} = X_{\tau}$.
	By using Theorem IV.78 and Remark IV.74 in~ \cite{dellacherie-meyer},
	there exists a $\F$--predictable process $Y$ which is indistinguishable from $X$.
	It follows that $\indi{A} = X_{\tau} = Y_{\tau} = Y_{\eta}, ~\P-\mathrm{a.s.}$,
	which implies that $A \in \Fc_{\eta} \vee \Nc^{\P}$ since $Y_{\eta}$ is $\Fc_{\eta}$--measurable.
	This ends the proof. \qed
	
	\vspace{1mm}
	
	The two next lemmas are used in Section~\ref{sec:proof1} only. They concern the r.c.p. $(\P_{\w})_{\w\in\Om}$ of $\P$ given a  $\sigma$--algebra $\Gc\subset\Fc$. They allow us to circumvent the difficulties raised by the fact that $\P_{\w}$ is not a measure on $\Fc^{\P}$. The key idea is to notice that, if $N$ in $\Fc$ satisfies $\P(N)=0$, then there is a $\P$--null set $M\in\Fc$ (depending on $N$) such that $\P_{\w}(N)=0$ for all $\w\in\Om\setminus M$. 
	Hence any subset of $N$ belongs to $\Nc^{\P}$ and, for all $\w\in\Om\setminus M$, to the set $\Nc^{\P_{\w}}$ of all $\P_{\w}$--negligible sets of $\Fc$.

 \begin{lemma} \label{lemm:rcpd}
	Define $\Gc^{\P}:=\Gc\vee\Nc^{\P}$ and $\Gc^{\P_{\w}}:=\Gc\vee\Nc^{\P_{\w}}$. Let $E$ be a Polish space and $\xi:\Om\to E$ be a $\Gc^{\P}$--random variable. Then \\
	\rmi There exists a $\Gc$--random variable $\xi^0:\Om\to E$ such that
	\begin{equation*}
	 \P\left(\xi=\xi^0\right)=1.
	\end{equation*}
	\rmii  For $\P$--a.a. $\w\in\Om$, $\xi$ is a $\Gc^{\P_{\w}}$--random variable and
	\begin{equation*}
		\P_{\w}\left(\xi=\xi(\w)\right)=1.
	\end{equation*}
	\rmiii Let $\Fc^{\P_{\w}}:=\Fc\vee\Nc^{\P_{\w}}$ and $\zeta$ be a $\P$--integrable $\Fc^{\P}$--random
	variable. Then for $\P$--a.a. $\w\in\Om$, $\zeta$ is $\Fc^{\P_{\w}}$--measurable and
	$$ \Ee[ \zeta~|~ \Gc^{\P} ] (\w) = \int_{\Om} \zeta(\w ')
	\P_\w(d\w '). $$
	 \end{lemma}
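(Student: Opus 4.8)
The plan is to derive all three parts from the single observation recorded just before the statement: if $N\in\Fc$ satisfies $\P(N)=0$, then, writing $\P(N)=\int_\Om\P_\w(N)\,\P(d\w)$ and using that $\w\mapsto\P_\w(N)$ is $\Gc$-measurable and nonnegative, the set $M_N:=\{\w:\P_\w(N)>0\}$ lies in $\Gc$, is $\P$-null, and satisfies $\P_\w(N)=0$ for every $\w\notin M_N$. Hence every subset of $N$ belongs to $\Nc^{\P_\w}$ for all $\w\notin M_N$. This is exactly the device that transfers $\P$-negligibility into $\P_\w$-negligibility for $\P$-almost every $\w$, despite each $\P_\w$ being a measure on $\Fc$ only, not on $\Fc^\P$.

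For \rmi I would first reduce to the scalar case. Since $E$ is Polish it is Borel isomorphic to a Borel subset of $\R$, so it suffices to build a $\Gc$-measurable version of a real-valued $\Gc^\P$-measurable $\xi$ and then transport it back, resetting the constructed map to a fixed point of $E$ on the $\Gc$-null set where it would leave $E$. For real-valued $\xi$, each $\{\xi\le q\}$ with $q\in\Q$ differs from some $B_q\in\Gc$ by a $\P$-null set; after making the family $(B_q)_{q\in\Q}$ monotone in $q$, the map $\xi^0(\w):=\inf\{q\in\Q:\w\in B_q\}$ is $\Gc$-measurable and equals $\xi$ $\P$-a.s. This step is classical and I anticipate no real difficulty.

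For \rmii let $\xi^0$ be the version from \rmi and set $N:=\{\xi\neq\xi^0\}\in\Nc^\P$. The key observation gives $N\in\Nc^{\P_\w}$ for all $\w$ outside a $\P$-null set, so $\xi=\xi^0$ $\P_\w$-a.s. and $\xi$ is therefore $\Gc^{\P_\w}$-measurable for such $\w$. For the value, I would invoke the defining property of the r.c.p.: for each $A\in\Gc$, $\P_\w(A)=\Ee[\indi A\,|\,\Gc](\w)=\indi A(\w)$ for $\P$-a.a. $\w$. Applying this to the $\Gc$-sets $\{\xi^0\in V_k\}$, where $(V_k)_k$ is a countable base of $E$ stable under finite intersections, and intersecting the resulting full-measure sets, I obtain a single $\P$-full set on which $\P_\w(\xi^0\in V_k)=\indi{V_k}(\xi^0(\w))$ for all $k$ simultaneously; since the $V_k$ form a generating $\pi$-system, this forces $(\xi^0)_*\P_\w=\delta_{\xi^0(\w)}$, i.e. $\P_\w(\xi^0=\xi^0(\w))=1$. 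Together with $\xi=\xi^0$ $\P_\w$-a.s. and $\xi^0(\w)=\xi(\w)$ off $N$, this yields $\P_\w(\xi=\xi(\w))=1$ for $\P$-a.a. $\w$.

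For \rmiii I would apply \rmi with $\Gc$ replaced by $\Fc$ to obtain an $\Fc$-measurable $\zeta^0$ with $\zeta=\zeta^0$ $\P$-a.s.; the key observation again gives $\zeta=\zeta^0$ $\P_\w$-a.s. for $\P$-a.a. $\w$, so $\zeta$ is $\Fc^{\P_\w}$-measurable and $\int_\Om\zeta\,d\P_\w=\int_\Om\zeta^0\,d\P_\w$ for such $\w$. Using that conditioning on $\Gc\vee\Nc^\P$ agrees $\P$-a.s. with conditioning on $\Gc$, and that $\Ee[\zeta^0\,|\,\Gc](\w)=\int_\Om\zeta^0\,d\P_\w$ for $\P$-a.a. $\w$ by the very definition of the r.c.p., the identities $\Ee[\zeta\,|\,\Gc^\P]=\Ee[\zeta^0\,|\,\Gc^\P]=\Ee[\zeta^0\,|\,\Gc]=\int_\Om\zeta^0\,d\P_\w=\int_\Om\zeta\,d\P_\w$ hold $\P$-a.s., which is the claim. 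The main obstacle is not any single estimate but the bookkeeping imposed by the mismatch between the augmented $\sigma$-fields, for which $\xi$ and $\zeta$ are measurable, and the plain $\sigma$-field $\Fc$ on which each $\P_\w$ lives; the propagation-of-null-sets observation is what reconciles the two, and the only genuinely delicate point is the upgrade in \rmii from the per-set identity $\P_\w(A)=\indi A(\w)$ to the pathwise Dirac identity on a common $\P$-full set.
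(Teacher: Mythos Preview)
Your proposal is correct and follows essentially the same route as the paper. The only noteworthy differences are cosmetic: in \rmi the paper invokes a monotone class argument together with the characterization $\Gc^{\P}=\{G\in\Fc^{\P}:\exists\,G^0\in\Gc,\ G\triangle G^0\in\Nc^{\P}\}$ rather than your explicit construction via $\inf\{q\in\Qp:\w\in B_q\}$, and in \rmii the paper simply asserts that $\P_{\w}(\xi^0=\xi^0(\w))=1$ ``since $\xi^0$ is a $\Gc$--random variable taking values in a Polish space'', whereas you spell out the countable-base/$\pi$-system upgrade to the Dirac identity; part \rmiii is essentially identical in both.
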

 \begin{proof}
	\rmi As $E$ is Polish, there exists a Borel isomorphism between $E$ and a subset of $\R$. This allows us
  to only consider real-valued random variables.
	A monotone class argument allows us to deal with
random variables of the type $\indi{G}$ with $G$ in
$\Gc^{\P}$, from which the result $(i)$  follows since
	\begin{equation} \label{prop:GP}
		\Gc^{\P} ~=~ \left\{G\in\Fc^{\P};\ \exists\, G^0\in\Gc,\ G\triangle G^0\in\Nc^{\P}\right\},
	\end{equation}
	where $\triangle$ denotes the symmetric difference (see, e.g., Rogers and Williams \cite[Ex.V.79.67a]{rogers-williams-I}).\\
	\rmii Let $\xi^0$ be a $\Gc$--random variable such that $\P(\xi=\xi^0)=1$. 
 In other words, there is a $\P$--null set $N\in\Fc$ 
 such that $\xi^0(\om) = \xi(\om)$ for all $\om \in \Om \setminus N$. 
 Hence, for $\P$--a.a. $\w$, we have $\P_{\w}(N) = 0$ 
 and $\{\xi\in A\}\triangle\{\xi^0\in A\} \subset N$ 
 belongs to $\Nc^{\P_{\w}}$ for all Borel set $A$.  
 It results from~(\ref{prop:GP}) with $\P_{\w}$ in place of $\P$ that $\xi$ is $\Gc^{\P_{\w}}$--measurable.
 Moreover, since $\xi^0$ is an $\Gc$--random variable taking values in a Polish space, we have
	\begin{equation*}
		\P_{\w}\left(\xi^0=\xi^0(\w)\right)=1,
	\end{equation*}
	for $\P$--a.a. $\w\in\Om$. Then, for all $\w\in\Om\setminus N$ such that
	\begin{equation*}
		\P_{\w}(N) = 0 ~~and~~ \P_{\w}\left(\xi^0=\xi^0(\w)\right)=1,
	\end{equation*}
	we have
	\begin{equation*}
		\P_{\w}\left(\xi=\xi(\w)\right)=1.
	\end{equation*}

\noindent \rmiii
Using the same arguments as in the proof of (ii), we get that $\zeta$ is $\Fc^{\P_\w}$--measurable.
Now, let $\chi$ be a bounded $\Gc^{\P}$--measurable random variable. 
In view  of~(i), there exists a $\Gc$--measurable (resp. $\Fc$--measurable) $\chi_0$ (resp. $\zeta_0$) random variable
such that $\chi=\chi_0$ (resp. $\zeta=\zeta_0$) $\P$--a.s. Therefore,
\begin{equation*}
 \E^{\P}\left[\zeta \chi\right] = \E^{\P}\left[\zeta_0 \chi_0\right] = \E^{\P}\left[\E^{\P}[ \zeta_0~|~ \Gc ] \chi\right].
\end{equation*}
Hence it holds
\begin{equation*}
 \E^{\P}[ \zeta~|~ \Gc^{\P} ] (\w) = \int_{\Om} \zeta_0(\w ')
	\P_\w(d\w '),\qquad \P(d\w)-a.s.
\end{equation*}
We then conclude by using that $\zeta$ is $\Fc^{\P_{\w}}$--measurable and $\P_{\w}(\zeta=\zeta_0)=1$ for $\P$--a.a $\w\in\Om$.

 \end{proof}

 \begin{lemma}\label{lemm:predictable}
   Let $\F^{\P_{\w}}$ be the $\P_{\w}$--augmented
filtration of $\F$. Let $E$ be a Polish space and $Y:\R^+\x\Om\to E$ be a $\F^{\P}$--predictable process. Then, for $\P$--a.a. $\w\in\Om$, $Y$ is $\F^{\P_{\w}}$--predictable.
 \end{lemma}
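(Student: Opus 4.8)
The plan is to reduce the predictability of $Y$ under each $\P_\w$ to a statement about generating classes of processes, exploiting the fact that $\F^\P$--predictability is witnessed by an $\F$--predictable modification. First I would invoke the structure already used in Lemma~\ref{lemm:rcpd}: since $Y$ is $\F^\P$--predictable and $E$ is Polish, by Dellacherie and Meyer~\cite[Thm.IV.78 and Rk.IV.74]{dellacherie-meyer} there exists an $\F$--predictable process $Y^0:\R^+\x\Om\to E$ that is $\P$--indistinguishable from $Y$, i.e. there is a $\P$--null set $N\in\Fc$ with $\{(s,\om):Y_s(\om)\ne Y^0_s(\om)\}\subset \R^+\x N$ (after projecting, the exceptional $\om$'s lie in $N$). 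The key observation, exactly as in the preamble to Lemma~\ref{lemm:rcpd}, is that for $\P$--a.a. $\w$ one has $\P_\w(N)=0$, so that $Y$ and $Y^0$ are also $\P_\w$--indistinguishable for those $\w$.

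Next I would argue that $Y^0$, being genuinely $\F$--predictable, is a fortiori $\F^{\P_\w}$--predictable for \emph{every} $\w$, simply because $\F\subset \F^{\P_\w}$ and the predictable $\sigma$--field is monotone in the filtration. Then, for each $\w$ in the full-measure set where $\P_\w(N)=0$, the process $Y$ differs from the $\F^{\P_\w}$--predictable process $Y^0$ only on the evanescent set $\R^+\x N$, which is $\P_\w$--evanescent. Since the $\P_\w$--augmented predictable $\sigma$--field contains all processes indistinguishable (under $\P_\w$) from $\F$--predictable ones, we conclude that $Y$ itself is $\F^{\P_\w}$--predictable for $\P$--a.a. $\w$.

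The one point deserving care, and the step I expect to be the main obstacle, is the passage from ``indistinguishable from an $\F$--predictable process under $\P_\w$'' to ``$\F^{\P_\w}$--predictable''. For real-valued or, via a Borel isomorphism $E\hookrightarrow\R$, for Polish-valued processes this follows again from the augmentation identity~\eqref{prop:GP} applied with $\P_\w$ in place of $\P$, which tells us precisely that the $\P_\w$--augmented predictable field is the $\P_\w$--completion of the $\F$--predictable field; a modification argument then closes the gap. I would spell this out by reducing, through the monotone class theorem and the Borel isomorphism, to indicator processes of predictable rectangles, where the identification is transparent. The only genuine subtlety is bookkeeping the dependence of the null set $N$ on $Y$ and verifying that the set of $\w$ with $\P_\w(N)=0$ is $\P$--full, which is the standard r.c.p. property already recalled in the text.
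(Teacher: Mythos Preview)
Your proposal is correct and follows essentially the same route as the paper: reduce to an $\F$--predictable modification $Y^0$ via Dellacherie--Meyer, then transfer indistinguishability under $\P$ to indistinguishability under $\P_\w$ by the r.c.p.\ null-set argument of Lemma~\ref{lemm:rcpd}(ii). The paper's proof is terser and does not single out your ``main obstacle'' (the passage from $\P_\w$--indistinguishability to $\F^{\P_\w}$--predictability), treating it as immediate; your extra care there is sound but not strictly needed, since a process that is $\P_\w$--indistinguishable from an $\F$--predictable one is automatically $\F^{\P_\w}$--predictable by the very definition of the augmented predictable $\sigma$--field.
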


 \begin{proof}
  As already noticed in the proof of Lemma~\ref{lemma:tau_eta}, there exists
a $\F$--predictable process indistinguishable from $Y$
under $\P$.
The result thus follows from our first arguments in the proof of Lemma~\ref{lemm:rcpd}~(ii).
 \end{proof}

	We end this section by two propositions which will not be used in the proof of the main results but were implicitly used
in the formulation of stochastic control problems in Section~\ref{subsec:ctrl_SDE}.

	The proposition below ensures that classical notions of measurability coincide for processes defined on the canonical space $\Om$. This is no longer true when $\Om$ is the space of c\`adl\`ag functions. For a precise statement, see Dellacherie and Meyer~\cite[Thm.IV.97]{dellacherie-meyer}.

	\begin{proposition} \label{prop:pre_opt_pro}
		Let $E$ be a Polish space and
		$Y : \R^+ \x \Om \to E$ be a stochastic process.
		Then the following statements are equivalent : \\
		\rmi $Y$ is $\F$--predictable.\\
		\rmii $Y$ is $\F$--optional. \\
		\rmiii $Y$ is $\F$--progressively measurable.\\
		\rmiv $Y$ is $\Bc(\R^+) \ox \Fc$--measurable and $\F$--adapted. \\
		\rmv $Y$ is $\Bc(\R^+) \ox \Fc$--measurable and satisfies
		\begin{equation*}
		 Y_s(\om)~=~Y_s([\om]_s),\quad \forall (s,\om)\in\R^+\x\Om.
		\end{equation*}
	\end{proposition}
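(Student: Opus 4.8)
The plan is to establish the cycle of implications $\mathrm{(i)}\Rightarrow\mathrm{(ii)}\Rightarrow\mathrm{(iii)}\Rightarrow\mathrm{(iv)}\Rightarrow\mathrm{(v)}\Rightarrow\mathrm{(i)}$. The first three arrows are the classical inclusions of $\sigma$--fields valid on \emph{any} filtered measurable space, namely predictable $\subseteq$ optional $\subseteq$ progressive, together with the fact that a progressively measurable process is $\Bc(\R^+)\ox\Fc$--measurable and $\F$--adapted; I would simply invoke these standard facts. All the content specific to the canonical space of \emph{continuous} paths is concentrated in the two remaining implications $\mathrm{(iv)}\Rightarrow\mathrm{(v)}$ and $\mathrm{(v)}\Rightarrow\mathrm{(i)}$, and both hinge on the elementary but crucial map $\pi_s:\om\mapsto[\om]_s$ and its space--time version $\Theta:(s,\om)\mapsto(s,[\om]_s)$.

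For $\mathrm{(iv)}\Rightarrow\mathrm{(v)}$, the key observation is that $\Fc_s=\sigma(\pi_s)$ and that $\pi_s$ is idempotent, i.e. $\pi_s\circ\pi_s=\pi_s$ (stopping a path twice at $s$ changes nothing). The inclusion $\Fc_s\subseteq\sigma(\pi_s)$ holds because $B_r=B_r\circ\pi_s$ for $r\le s$, while $\sigma(\pi_s)\subseteq\Fc_s$ holds because $B_r\circ\pi_s=B_{s\wedge r}$ is $\Fc_s$--measurable for every $r$. Now $\mathrm{(iv)}$ says that $Y_s$ is $\Fc_s=\sigma(\pi_s)$--measurable; since $E$ is Polish, the Doob--Dynkin lemma gives $Y_s=g\circ\pi_s$ for some Borel map $g$, and idempotency yields $Y_s=g\circ\pi_s\circ\pi_s=Y_s\circ\pi_s$, which is exactly $Y_s(\om)=Y_s([\om]_s)$. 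Combined with the joint measurability already assumed in $\mathrm{(iv)}$, this is $\mathrm{(v)}$.

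For $\mathrm{(v)}\Rightarrow\mathrm{(i)}$, which is the heart of the argument, I would reason at the level of $\sigma$--fields. Condition $\mathrm{(v)}$ states precisely that $Y=Y\circ\Theta$, and since $Y$ is $\Bc(\R^+)\ox\Fc$--measurable, the composition $Y\circ\Theta$ is automatically measurable with respect to $\sigma(\Theta)$; hence $Y$ itself is $\sigma(\Theta)$--measurable. It therefore suffices to prove that $\sigma(\Theta)$ is contained in the $\F$--predictable $\sigma$--field $\mathcal{P}$, i.e. that $\Theta$ is a predictable map. As $\Fc$ is generated by the coordinate maps $B_r$, this reduces to showing that each space--time process $(s,\om)\mapsto B_r([\om]_s)=\om(s\wedge r)=B_{s\wedge r}(\om)$ is predictable. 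But $B_{\cdot\wedge r}$ is the canonical process stopped at $r$: it is continuous in $s$ and $\F$--adapted, hence left--continuous and adapted, hence predictable. Consequently $\Theta$ is predictable, $\sigma(\Theta)\subseteq\mathcal{P}$, and $Y$ is $\F$--predictable.

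The main obstacle, and the single place where the restriction to $\Om=C(\R^+,\R^d)$ is indispensable, is the predictability of the stopped coordinate process $(s,\om)\mapsto\om(s\wedge r)$, which rests entirely on the continuity of the paths. On the space of càdlàg paths this process is only optional, not predictable, and indeed the five notions no longer coincide there (cf. Dellacherie and Meyer~\cite[Thm.IV.97]{dellacherie-meyer}). A secondary technical point to watch is that the factorization step in $\mathrm{(iv)}\Rightarrow\mathrm{(v)}$ uses that $E$ is Polish, so that Doob--Dynkin applies; this is why the statement is phrased for Polish--valued processes.
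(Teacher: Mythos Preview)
Your proof is correct and follows essentially the same route as the paper's: the same cycle of implications, the same use of $\Fc_s=\sigma([\cdot]_s)$ together with idempotency of the stopping map and the Doob--Dynkin factorization for $\mathrm{(iv)}\Rightarrow\mathrm{(v)}$, and the same reduction of $\mathrm{(v)}\Rightarrow\mathrm{(i)}$ to the predictability of $(s,\om)\mapsto\om(s\wedge r)$ via continuity and adaptedness. The only cosmetic difference is that you package the stopping map as the space--time map $\Theta$ whereas the paper works directly with $[\cdot]$ viewed as an $\Om$--valued process, but the content is identical.
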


	\proof It is well known that $(i)\Rightarrow(ii)\Rightarrow(iii)\Rightarrow(iv)$. Now we show that $(iv)\Rightarrow(v)$.
		Remember that $\Fc_s=\sigma([\cdot]_s:\om\mapsto[\om]_s)$ for all $s\in\R^+$. Since $Y_s$ is $\Fc_s$--measurable and $E$ is Polish,
		Doob's functional representation Theorem (see, e.g.,  Kallenberg~\cite[Lem.1.13]{kallenberg}) implies that
		there exists a random variable $Z_s$ such that $Y_s(\om)=Z_s([\om]_s)$ for all $\om\in\Om$. By noticing that $[\cdot]_s\circ [\cdot]_s = [\cdot]_s$, we deduce that
		\begin{equation*}
		 Y_s([\om]_s) = Z_s([\om]_s) = Y_s(\om),\quad \forall (s,\om)\in\R^+\x\Om.
		\end{equation*}
		To conclude it remains to prove that $(v)\Rightarrow(i)$. Clearly it is enough to show that $[\cdot]:(s,\om)\mapsto[\om]_s$ is a predictable
		process. In other words, we have to show that for any $C\in\Fc$, $[\cdot]^{-1}(C)$ is a predictable event. When
		$C$ is of the type $B_{t}^{-1}(A)$ with $t\in\R^+$ and $A\in\Bc(\R^d)$, the result holds true since $B_t\circ [\cdot]:(s,\om)\mapsto \om(t\wedge s)$ is predictable as a continuous and adapted $\R^d$--valued process. We conclude  by observing that the former events generate the $\sigma$--algebra $\Fc$.
	\qed
	
	\vspace{1mm}
	
	Now, let $(\Om^*, \Fc^*)$ be an abstract measurable space equipped with a measurable, continuous process $X^* = (X^*_t)_{t \ge 0}$.
	Denote by $\F^* = (\Fc^*_t)_{t \ge 0} $ the filtration generated by $X^*$,
	i.e. $\Fc^*_t = \sigma \big\{ X^*_s, ~s \le t \big\}$.
	
	\begin{proposition} \label{prop:abstract_space}
		Let $E$ be a Polish space.
		Then a process $Y: \R^+ \x \Om^* \to E$ is $\F^*$-predictable
		if and only if there exists a process $\phi: \R^+ \x \Om \to E$ defined on the canonical space $\Om$ and
		$\F$--predictable such that
		\begin{equation} \label{eq:Yt}
			Y_t(\om^*) ~=~ \phi \big(t, X^*(\om^*) \big)
			~=~ \phi \big(t, \left[X^*\right]_{t}(\om^*) \big),
			~~~~\forall (t, \om^*) \in \R^+ \x \Om^*.
		\end{equation}
	\end{proposition}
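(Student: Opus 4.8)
The plan is to lift everything to the canonical space through the single map
\[
  \Psi : \R^+ \x \Om^* \to \R^+ \x \Om, \qquad \Psi(t,\om^*) := \big(t, X^*(\om^*)\big),
\]
where $X^*(\om^*) := (X^*_s(\om^*))_{s \ge 0} \in \Om$ is the continuous path traced out by $X^*$. Since $X^*$ is a measurable process with continuous paths, the induced map $\om^* \mapsto X^*(\om^*)$ is $\Fc^*/\Fc$--measurable: indeed $B_u \circ X^* = X^*_u$ is $\Fc^*$--measurable for each $u$, and $\Fc = \sigma(B_u,\ u \ge 0)$. Writing $\mathcal{P}(\F)$ and $\mathcal{P}(\F^*)$ for the two predictable $\sigma$--fields, I claim the whole statement follows once we establish the key identity
\begin{equation} \label{eq:pred_pullback}
  \mathcal{P}(\F^*) ~=~ \Psi^{-1}\big(\mathcal{P}(\F)\big).
\end{equation}
Granting~\eqref{eq:pred_pullback}, sufficiency is immediate: if $\phi$ is $\F$--predictable, i.e. $\mathcal{P}(\F)$--measurable, then $Y = \phi \circ \Psi$ is $\Psi^{-1}(\mathcal{P}(\F))$--measurable, hence $\mathcal{P}(\F^*)$--measurable, i.e. $\F^*$--predictable. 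For necessity, an $\F^*$--predictable $Y$ is $\mathcal{P}(\F^*)$--measurable, so by~\eqref{eq:pred_pullback} it is $\sigma(\Psi) = \Psi^{-1}(\mathcal{P}(\F))$--measurable; since $E$ is Polish, the Doob--Dynkin functional representation (\cite[Lem.1.13]{kallenberg}) yields an $\F$--predictable map $\phi : \R^+ \x \Om \to E$ with $Y = \phi \circ \Psi$, that is $Y_t(\om^*) = \phi(t, X^*(\om^*))$.

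To prove~\eqref{eq:pred_pullback} I would argue on generators. Recall that $\mathcal{P}(\F^*)$ is generated by the predictable rectangles $\{0\} \x F_0$ with $F_0 \in \Fc^*_0$ and $(s,t] \x F$ with $0 \le s < t$ and $F \in \Fc^*_s$, and similarly $\mathcal{P}(\F)$ is generated by such rectangles with $\Fc^*_\bullet$ replaced by $\Fc_\bullet$. The crucial point is the filtration--level identity
\[
  \Fc^*_s ~=~ (X^*)^{-1}(\Fc_s), \qquad s \ge 0,
\]
which holds because $\Fc_s = \sigma(B_u,\ u \le s)$, whence $(X^*)^{-1}(\Fc_s) = \sigma(B_u \circ X^*,\ u \le s) = \sigma(X^*_u,\ u \le s) = \Fc^*_s$. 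Consequently every generator $(s,t] \x F$ of $\mathcal{P}(\F^*)$ can be written, via $F = (X^*)^{-1}(C)$ with $C \in \Fc_s$, as $(s,t] \x F = \Psi^{-1}\big((s,t] \x C\big)$, a pullback of a generator of $\mathcal{P}(\F)$ (and symmetrically for the rectangles based at $0$); reading the same computation in the other direction gives the reverse containment. Both inclusions together yield~\eqref{eq:pred_pullback}.

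Finally, the second equality in~\eqref{eq:Yt} is automatic. Any $\F$--predictable $\phi$ is in particular $\F$--optional, so by Proposition~\ref{prop:pre_opt_pro}\,(v) it satisfies $\phi(t,\om) = \phi(t,[\om]_t)$; evaluating at $\om = X^*(\om^*)$ and using $[X^*(\om^*)]_t = [X^*]_t(\om^*)$ gives $\phi(t, X^*(\om^*)) = \phi(t, [X^*]_t(\om^*))$. I expect the main obstacle to be the careful verification of~\eqref{eq:pred_pullback}: one has to make sure the two predictable $\sigma$--fields are genuinely generated by the announced rectangles, track that the operations taking generators to the full $\sigma$--fields commute with the pullback $\Psi^{-1}$, and confirm that Doob--Dynkin applies to the $E$--valued $Y$ (here the Polish assumption on $E$ is essential). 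The measurability of the lift $\om^* \mapsto X^*(\om^*)$ into $\Om$, which rests on path--continuity, is the other point that deserves care.
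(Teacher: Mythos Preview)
Your argument is correct and close in spirit to the paper's, but the packaging differs. The paper handles the two directions separately: for sufficiency it checks directly that $(t,\om^*)\mapsto [X^*]_t(\om^*)$ is $\F^*$--predictable and composes; for necessity it verifies the representation on indicators of predictable rectangles $(t_1,t_2]\times A$ (writing $A=([X^*]_{t_1})^{-1}(C)$ for some $C\in\Fc$) and then invokes the monotone class theorem. You instead prove the single $\sigma$--field identity $\mathcal{P}(\F^*)=\Psi^{-1}(\mathcal{P}(\F))$ on generators and then read off both directions at once, using Doob--Dynkin for necessity. Your route is a bit more conceptual and reusable (the pullback identity does all the work), while the paper's hands-on construction avoids quoting Doob--Dynkin as a black box; since Doob--Dynkin is itself proved by a monotone class argument, the two proofs are essentially the same under the hood. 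Your appeal to Proposition~\ref{prop:pre_opt_pro}(v) for the second equality in~\eqref{eq:Yt} is exactly what the paper intends.
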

	
	\begin{proof}
		\textsl{First step.}
Let $\phi: \R^+ \x \Om \to E$ be a $\F$--predictable process on the canonical space.
		Notice that $(t, \om^*) \mapsto t$ and $(t,\om^*) \mapsto [X^*]_{t}(\om^*)$ are both $\F^*$--predictable.
		It follows that $(t, \om^*) \mapsto Y_t(\om^*) := \phi(t, [X^*]_{t}(\om^*))$ is also $\F^*$--predictable.
		
		\vspace{1mm}
		
		\noindent \textsl{Second step.}
We now prove the converse relation.
		Suppose that $Y$ is a $\F^*$--predictable process of the type  $Y_s(\om^*) = \indi{(t_1, t_2] \x A}(s, \om^*)$, where $0 < t_1 < t_2$ and $A \in \Fc^*_{t_1}$.
	   There exists $C$ in $\Fc$ such that $A = ([X^*]_{t_1})^{-1}(C)$, so that \eqref{eq:Yt} holds true with
		\begin{equation*}
			\phi(s, \w) := \indi{(t_1, t_2] \x C}(s, [\w]_{t_1}).
		\end{equation*}
		In view of Proposition~\ref{prop:pre_opt_pro}, it is clear that the above $\phi$ is $\F$--predictable.
		The same arguments show that~\eqref{eq:Yt} holds also true if $Y$ is of the form $Y_s(\om^*) = \indi{\{0\} \x A}$ with $A \in \Fc_0^*$.
		Notice that the predictable $\sigma$-field on $\R^+ \x \Om^*$ w.r.t. $\F^*$ is generated by the collection of all sets of the form $\{0\} \x A$ with $A \in \Fc^*_0$ and of the form $(t_1, t_2] \x A$ with $0 < t_1 < t_2$ and $A \in \Fc^*_{t_1}$ (see, e.g.,
        \cite[Thm.IV.64]{dellacherie-meyer}).
		It then remains to use the monotone class theorem
to conclude.
	\end{proof}

\section{A first proof of Theorem \ref{theo:main}}
\label{sec:proof1}

In this section we develop the arguments sketched by
Fleming and Souganidis \cite{fleming-souganidis} in the proof
of  their Lemma 1.11.

	Let $\tau$ be a $\F^{\P}$--stopping time taking values in $[t,+\infty)$.
We have
	\begin{equation*}
		X^{t,\xb,\nu}_{\theta}
		=
		X^{t,\xb,\nu}_{\tau\wedge\theta}
		+
		\int_{\tau}^{\tau\vee\theta} b(s,X^{t,\xb,\nu},\nu_s)ds
		+
		\int_{\tau}^{\tau\vee\theta}\sigma(s, X^{t,\xb,\nu},\nu_s)dB_s,
		~\forall \theta\geq 0, ~\P\mbox{--a.s.}
	\end{equation*}

Now, it follows from Lemma~\ref{lemma:tau_eta} that
	there is a $\F$--stopping time $\eta$ such that \eqref{eq:equiv_stoptime} holds true.
	Let $(\P_{\w})_{\w \in \Om}$ be a family of r.c.p. of $\P$ given $\Fc_{\eta}$. In view of Lemma~\ref{lemm:rcpd},
   for $\P$--a.a. $\w \in \Om$,

	\begin{equation}\label{eq:condequiv}
		\E^{\P}\left[ \Phi \left(X^{t,\xb,\nu}\right)\Big|\,\Fc^{\P}_\tau\right](\w)
		~=~
		\E^{\P_{\w}}\left[ \Phi \left(X^{t,\xb,\nu}\right)\right],
	\end{equation}
	and
	\begin{equation}\label{eq:rcpdprop}
		\P_{\w} \big( \tau = \tau(\w),
		        ~[X^{t,\xb,\nu}]_{\tau} = [X^{t,\xb,\nu}]_{\tau}(\w),
			~\nu = \nu^{\tau(\w),\w}
		\big)
		~=~
		1.
	\end{equation}

 In view of Lemma
\ref{lemm:integral_diff_P} below we have, for $\P$--a.a. $\w \in \Om$,
	 \begin{equation*}
		\int_{[\tau,\theta]}^{\P} {\sigma\left(s, X^{t,\xb,\nu}, \nu_s\right)} d B_s
		~=~
		\int_{[\tau(\w),\theta]}^{\P_{\w}} {\sigma\left(s, X^{t,\xb,\nu}, \nu_s\right)} d B_s,\quad\forall\, \theta\geq\tau(\w),\ \P_{\w}-\mathrm{a.s.},
	\end{equation*}
	where the l.h.s. (resp. r.h.s.) term denotes the stochastic integral constructed under $\P$ (resp. $\P_{\w}$).
	It follows from~\eqref{eq:rcpdprop} that, for $\P$--a.a. $\w\in\Om$,
	\begin{eqnarray*}
		X^{t,\xb,\nu}_{\theta}
		&=&
		X^{t,\xb,\nu}_{\tau\wedge\theta}(\w)
		~+~
		\int_{\tau(\w)}^{\tau(\w)\vee\theta} b(s,X^{t,\xb,\nu},\nu^{\tau(\w),\w}_s)ds \\
		&& +~
		\int_{\tau(\w)}^{\tau(\w)\vee\theta}\sigma(s, X^{t,\xb,\nu},\nu^{\tau(\w),\w}_s)dB_s,
		~~~~
		\forall \theta\geq 0,\ \P_{\w}-\mathrm{a.s.}
	\end{eqnarray*}
	
	Notice that, by Lemma~\ref{lemm:predictable}, $X^{t,\xb,\nu}$ is $\F^{\P_{\w}}$--adapted, for $\P$--a.a. $\w\in\Om$.
Hence, $X^{t,\xb,\nu}$ is the solution of SDE~\eqref{eq:SDE}
with initial condition $(\tau(\w),[X^{t,\xb,\nu}]_{\tau}(\w))$
	and  control $\nu^{\tau(\w),\w}$ in $(\Om,\F^{\P_{\w}},\P_{\w})$ for $\P$--a.a. $\w\in\Om$.
As the SDE~\eqref{eq:SDE_Int_bis} satisfies uniqueness in law, 
	 the law of $X^{t,\xb,\nu}$ under $\P_{\w}$
	coincides with the law of $X^{\tau(\w),[X^{t,\xb,\nu}]_{\tau}(\w),\nu^{\tau(\w),\w}}$ under $\P$.
	We then conclude the proof by using~\eqref{eq:condequiv}.
	\qed
	
	 \begin{lemma} \label{lemm:integral_diff_P}
		Let $H$ be a $\F^{\P}$--predictable process such that
		\begin{equation*}
			\int_0^\theta {(H_s)^2} ds < +\infty, \quad \forall \theta\geq 0,\ \P-\text{a.s.}
		\end{equation*}
		Then, using the same notation as above for the stochastic
integrals, we have: For $\P$--a.a. $\w\in\Om$,
		$\int_{[\tau,\theta]}^{\P} {H_s} d B_s$ is $\Fc^{\P_{\w}}$--measurable and
		 \begin{equation}\label{eq:intsto}
		 	\int_{[\tau,\theta]}^{\P} {H_s} d B_s ~=~ \int_{[\tau(\w),\theta]}^{\P_{\w}} {H_s} d B_s,\quad \forall\,\theta\geq \tau(\w),\ \P_{\w}-\text{a.s.}
	 	\end{equation}
	\end{lemma}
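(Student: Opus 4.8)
The plan is to establish \eqref{eq:intsto} by the standard approximation scheme for stochastic integrals, passing from simple integrands (for which the integral is a finite pathwise sum that trivially agrees under $\P$ and $\P_{\w}$) to general integrands via an $L^2$ limit, while carefully controlling the exceptional $\P$--null sets so that a single $\P$--null set of $\w$ suffices. First I would reduce to the case of a bounded integrand $H$ with $\int_0^\theta H_s^2\,ds$ bounded, by a localization argument using stopping times $T_n:=\inf\{\theta:\int_0^\theta H_s^2\,ds\ge n\}$; since these are $\F^{\P}$--stopping times they are $\P$--a.s. equal to $\F$--stopping times by Lemma~\ref{lemma:tau_eta}, and one checks that $T_n\to\infty$ both $\P$--a.s. and $\P_{\w}$--a.s. for $\P$--a.a. $\w$ (again via Lemma~\ref{lemm:rcpd}). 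By Lemma~\ref{lemm:predictable}, $H$ is $\F^{\P_{\w}}$--predictable for $\P$--a.a. $\w$, so the right-hand integral in \eqref{eq:intsto} is well defined under $\P_{\w}$.

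Next I would treat simple predictable integrands of the form $H_s=\sum_{i} \zeta_i\,\indi{(t_i,t_{i+1}]}(s)$ with $\zeta_i$ an $\Fc^{\P}_{t_i}$--measurable bounded random variable. For such $H$ the stochastic integral against $B$ over $[\tau,\theta]$ is an explicit finite sum of terms of the form $\zeta_i\big(B_{t_{i+1}\wedge\theta\vee\tau}-B_{t_i\wedge\theta\vee\tau}\big)$, which is the \emph{same} measurable functional of the path whether computed under $\P$ or under $\P_{\w}$; so for the simple case \eqref{eq:intsto} holds identically, modulo using Lemma~\ref{lemm:rcpd}(ii) to replace the $\Fc^{\P}_{t_i}$--measurable $\zeta_i$ by $\P_{\w}$--a.s. equal $\Fc_{t_i}$--measurable versions. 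The delicate point is the passage to the limit: by the It\^o isometry there is a sequence $H^{(n)}$ of simple processes with $\E^{\P}\!\int_0^\theta (H^{(n)}_s-H_s)^2\,ds\to 0$, whence $\int_{[\tau,\theta]}^{\P}H^{(n)}_s\,dB_s\to\int_{[\tau,\theta]}^{\P}H_s\,dB_s$ in $L^2(\P)$, and along a subsequence $\P$--a.s. I must then upgrade this to convergence under $\P_{\w}$, which requires knowing that $\E^{\P_{\w}}\!\int_{\tau(\w)}^{\theta}(H^{(n)}_s-H_s)^2\,ds\to 0$ for $\P$--a.a. $\w$.

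This last upgrade is the main obstacle. The key tool is Lemma~\ref{lemm:rcpd}(iii): since $\E^{\P}\!\int_0^\theta(H^{(n)}_s-H_s)^2\,ds=\E^{\P}\big[\E^{\P}[\,\cdot\mid\Fc^{\P}_\tau]\big]$ and the inner conditional expectation is represented as $\int_\Om(\cdots)\,\P_{\w}(d\w')$, the $L^1(\P)$ convergence of the outer expectation forces, after extracting a subsequence, the $\P_{\w}$--expectations $\E^{\P_{\w}}\!\int_{\tau(\w)}^{\theta}(H^{(n_k)}_s-H_s)^2\,ds$ to tend to $0$ for $\P$--a.a. $\w$ (a Fubini-type argument combined with the fact that an $L^1$--convergent sequence converges a.e. along a subsequence). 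Consequently, for $\P$--a.a. $\w$, the right-hand integrals $\int_{[\tau(\w),\theta]}^{\P_{\w}}H^{(n_k)}_s\,dB_s$ converge in $L^2(\P_{\w})$ to $\int_{[\tau(\w),\theta]}^{\P_{\w}}H_s\,dB_s$, while by the simple-integrand identity they equal $\int_{[\tau,\theta]}^{\P}H^{(n_k)}_s\,dB_s$, which converge $\P_{\w}$--a.s. to $\int_{[\tau,\theta]}^{\P}H_s\,dB_s$ (using \eqref{eq:rcpdprop} and that $\P_{\w}$--a.s. convergence is inherited from $\P$--a.s. convergence on a $\P_{\w}$--full set). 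Matching the two limits yields \eqref{eq:intsto}. The care needed throughout is bookkeeping of the countably many $\P$--null exceptional sets — one for each rational $\theta$, each $n$, and the integrand-measurability statements — so that their union is still $\P$--null; the $\Fc^{\P_{\w}}$--measurability assertion then follows since each approximating integral is $\Fc^{\P_{\w}}$--measurable and $\Fc^{\P_{\w}}$ is $\P_{\w}$--complete.
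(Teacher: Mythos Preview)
Your proposal is correct and follows essentially the same strategy as the paper: localize, approximate $H$ by simple integrands for which the identity is a pathwise finite sum, use conditioning to extract a subsequence along which the $L^2$ approximation of the integrand transfers to $\P_{\w}$ for $\P$--a.a.\ $\w$, and match the two limits. The only cosmetic differences are that the paper invokes Doob's maximal inequality to obtain uniform-in-$\theta$ convergence (in place of your rational-$\theta$ bookkeeping), and that it handles convergence of the $\P$--integral under $\P_{\w}$ by conditioning the $L^2(\P)$ estimate for $\sup_\theta\big(\int^{\P}(H^{(n)}_s-H_s)\indi{s\ge\tau}\,dB_s\big)^2$ and extracting a further subsequence, rather than your route of first passing to $\P$--a.s.\ convergence and then observing that a $\P$--null set is $\P_{\w}$--null for $\P$--a.a.\ $\w$; your citation of \eqref{eq:rcpdprop} in that step is unnecessary, since only this last observation is used.
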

	
	\begin{proof}
	In this proof we implicitly use Lemma~\ref{lemm:predictable} to consider $\F^{\P}$--predictable processes, such as the stochastic integrals defined under $\P$, as $\F^{\P_{\w}}$--predictable processes for $\P$--a.a. $\w\in\Om$.
	In particular, the l.h.s. of \eqref{eq:intsto} is $\Fc^{\P_{\w}}$--measurable for $\P$--a.a. $\w\in\Om$.

A standard localizing procedure allows us to assume that
	 \begin{equation*}
		\E^{\P}\left[\int_0^{+\infty} {(H_s)^2} ds\right] < +\infty.
	 \end{equation*}

	 Now let $(H^{(n)})_{n\in\mathbb{N}}$  be a sequence of simple processes such that
	 \begin{equation}\label{eq:simple-approx}
		\lim_{n\to\infty} {\E^{\P}\left[\int_{0}^{+\infty} {\left(H^{(n)}_s - H_s\right)^2} ds\right]} = 0.
	 \end{equation}

We then write
\begin{equation*}
\begin{split}
\int^{\P} H_s dB_s  - \int^{\P_{\w}} H_s dB_s
&= \int^{\P} H^{(n)}_s dB_s  - \int^{\P_{\w}} H^{(n)}_s dB_s
\\
&\quad + \int^{\P} (H_s-H^{(n)}_s) dB_s
- \int^{\P_{\w}} (H_s-H^{(n)}_s) dB_s
\end{split}
\end{equation*}
and notice that the first difference is null since the stochastic integral is defined pathwise when the integrand is a simple process.

By taking conditional expectations in~\eqref{eq:simple-approx}, we get that there exists
 a subsequence such that
	 \begin{equation*}
		\lim_{n\to\infty} {\E^{\P_{\w}}\left[\int_{0}^{+\infty} {\left(H^{(n)}_s - H_s\right)^2} ds\right]} = 0,
	 \end{equation*}
	 for $\P$--a.a. $\w\in\Om$. Hence, by Doob's inequality and It\^{o}'s isometry,
	 \begin{equation*}
	  \lim_{n\to\infty} {\E^{\P_{\w}} \left[\sup_{\theta\geq \tau(\w)}{\left\{\left(\int_{[\tau(\w),\theta]}^{\P_{\w}} {H^{(n)}_s} dB_s - \int_{[\tau(\w),\theta]}^{\P_{\w}} {H_s} dB_s\right)^2\right\}}\right]} = 0.
	 \end{equation*}
	 To conclude, it thus suffices to prove that
	 \begin{equation*}
	  \lim_{n\to\infty} \E^{\P_{\w}}
\left[ \sup_{\theta\geq \tau(\w)}
\left\{ \left(\int_{[\tau(\w),\theta]}^{\P}
{H^{(n)}_s} dB_s - \int_{[\tau,\theta]}^{\P} {H_s}
dB_s\right)^2\right\} \right] = 0.
	 \end{equation*}
	 for $\P$--a.a. $\w\in\Om$.
We cannot use Doob's inequality and It\^{o}'s isometry without care because the stochastic integrals are
built under $\P$ and the expectation is computed under
$\P_{\w}$. However we have
	 \begin{equation*}
	  \lim_{n\to\infty} {\E^{\P} \left[\sup_{\theta\geq0}{\left\{\left(\int_{[0,\theta]}^{\P} {H^{(n)}_s \indi{s\geq \tau}} d B_s - \int_{[0,\theta]}^{\P} {H_s \indi{s\geq \tau}} d B_s\right)^2\right\}}\right]} = 0.
	 \end{equation*}
	 Thus we can proceed as above by taking  conditional
expectation  and extracting a new subsequence. That ends the
proof.
	\end{proof}
	
	\begin{remark}
		   To avoid the technicalities of this first proof
		   	of Theorem~\ref{theo:main},
		   	a natural attempt consists in solving
		   	the equations~\eqref{eq:SDE} in the space $\Om$ equipped
		   	with the non augmented filtration. This may be achieved by following Stroock and Varadhan's approach~\cite{stroock-varadhan}
		   	to stochastic integration. This way leads to
		   	right-continuous and only $\P$--a.s continuous stochastic integrals and solutions.
		   	As they are not valued in a Polish space, new delicate technical issues arise: for example, 
		   	\eqref{eq:condequiv} and \eqref{eq:rcpdprop} need to be revisited.
		   	

	\end{remark}
	
\section{A second proof of Theorem \ref{theo:main}}
\label{sec:proof2}

In this section we provide a second proof
of~Theorem \ref{theo:main}. Compared to the above first
proof, the technical details are lighter.
However we emphasize that it uses that weak uniqueness
for Brownian SDEs is equivalent to uniqueness of  solutions
to the corresponding martingale problems. Therefore
it may be more difficult to extend
this second methodology than the first one
to stochastic control problems where
the noise involves a Poisson
random measure.

	Our  second proof of Theorem \ref{theo:main} is based
on the notion of controlled martingale problems on the
enlarged canonical space
$\Omb := \Om^2$.
	Denote by $\Fb = (\Fcb_t)_{t \ge 0}$ the canonical filtration, and by $(B,X)$ the canonical process.

	Fix $(t,\nu) \in \R^+ \x\Uc$. Let $\bar{b}^{t,\nu}$ and
$\bar{\sigma}^{t,\nu}$ be defined as in Section~\ref{subsec:ctrl_SDE}.
	For all functions $\varphi$ in $\mathcal{C}^2_c(\R^{2d})$,
	let the process $(\overline M^{t,\nu,\varphi}_{\theta}, \theta \ge t)$ be defined on the enlarged space $\Omb$ by
	\begin{equation} \label{eq:Mb}
		\overline M_\theta^{t,\nu,\varphi}(\omb)
		~~:=~~
		\varphi(\omb_{\theta})
		~-~
		\int_t^\theta \Lc^{t,\nu}_s
		\varphi(\omb) ds,~~\theta \ge t,
	\end{equation}
	where $\Lc^{t,\nu}$ is the differential operator
	\begin{equation*}
		\Lc^{t,\nu}_s \varphi(\omb)
		~~:=~~
		\bar{b}^{t,\nu}(s,\omb)\cdot D \varphi(\omb_s)
		~+~
		\dfrac{1}{2} ~\bar{a}^{t,\nu} (s,\omb) : D^2 \varphi(\omb_s);
	\end{equation*}
	here, we set $\bar{a}^{t,\nu}(s,\omb) := \bar{\sigma}^{t,\nu}(s,\omb){\bar{\sigma}^{t,\nu}(s,\omb)}^\ast$
	and the operator ``$:$'' is defined by $p:q := \text{Tr} (pq^*)$   for all $p$ and $q$ in $S_{2d,d}$.
	It is clear that the process $\overline M^{t,\nu,\varphi}$ is $\Fb$--progressively measurable.

	Let $(t,\xb,\nu) \in \R^+ \x \Om \x \Uc$. Denote by $\Pb^{t,\xb,\nu}$ the probability measure on $\Omb$ induced by $(B, X^{t,\xb,\nu})$ under the Wiener measure $\P$.
	For all $\varphi$ in $\mathcal{C}_c^2(\R^{2d})$, the process
	$\overline M^{t,\nu,\varphi}$ is a martingale under $\Pb^{t,\xb,\nu}$ and
	\begin{equation*}
		\Pb^{t,\xb,\nu} \big( X_s = \xb(s),~ 0 \le s \le t \big)
		~=~
		1.
	\end{equation*}
	Let $\tau$ be a $\F^{\P}$--stopping time taking values in $[t,+\infty)$ and let $\eta$ be the $\F$--stopping time
    defined in Lemma~\ref{lemma:tau_eta}.
	Set $\bar \nu (\wb) := \nu (\w)$ and
	$\etab (\wb) := \eta(\w)$ for all $\wb=(\w, \w') \in \Omb$.
	It is clear that $\etab$ is a $\Fb$--stopping time.
	Then there is a family of r.c.p. of $\Pb^{t,\xb,\nu}$ given $\Fcb_{\etab}$
	denoted by $(\Pb^{t,\xb,\nu}_{\wb})_{\wb \in \Omb}$, and a $\Pb^{t,\xb,\nu}$--null set $\overline N \subset \Omb$ such that
	\begin{equation}\label{eq:rcpd2}
		\Pb^{t,\xb,\nu}_{\wb} \big( \bar \eta = \eta(\w), B_s = \w(s),  X_s = \w'(s), 0 \le s \le \eta(\w) \big)
		~=~
		1
	\end{equation}
	for all $\wb = (\w, \w') \in \Omb \setminus \overline N$.
	In particular, one has
	\begin{equation*}
		\Pb^{t,\xb,\nu}_{\wb} \big( \bar \nu_s = \nu^{\eta(\w),\w}_s,\ \forall\, s\geq0 \big)
		~=~
		1
	\end{equation*}
	for all $\wb = (\w, \w') \in \Omb \setminus \overline N$. Moreover, Lemma 6.1.3 in~\cite{stroock-varadhan}
	combined with a standard localization argument
implies that for $\Pb^{t,\xb,\nu}$--a.a. $\wb\in\Omb$, for all $\varphi \in C_c^2(\R^{2d})$, the process
	\begin{equation*}
		\varphi(\omb_{\theta})
		~-~
		\int_{\eta(\w)}^\theta \Lc^{t,\nu}_s
		\varphi(\omb) ds,~~\theta \ge \eta(\w),
	\end{equation*}
	is a martingale under $\Pb^{t,\xb,\nu}_{\wb}$. It follows by \eqref{eq:rcpd2} that  for $\Pb^{t,\xb,\nu}$--a.a. $\wb\in\Omb$, for all $\varphi \in C_c^2(\R^{2d})$, $\overline M^{\eta(\w),\nu^{\eta(\w),\w},\varphi}$ is a martingale under $\Pb^{t,\xb,\nu}_{\wb}$.
	
	As weak uniqueness is equivalent to uniqueness of solutions to martingale problem\footnote{
	Here the SDE to consider is : for all $\theta\in\R^+$,
	\begin{equation*}
		 Z_{\theta}
		 ~=~
		 \wb(\eta(\w)\wedge\theta)
		 +
		 \int_{\eta(\w)}^{\eta(\w)\vee\theta} \bar{b}^{\eta(\w),\nu^{\eta(\w),\w}}(s,Z)ds
		 +
		 \int_{\eta(\w)}^{\eta(\w)\vee\theta} \bar{\sigma}^{\eta(\w),\nu^{\eta(\w),\w}}(s,Z)dB_s.
	 \end{equation*}
	}, for $\Pb^{t,\xb,\nu}$--a.a. $\wb\in\Omb$,
	$\Pb^{t,\xb,\nu}_{\wb}$ coincides with the probability measure on $\Omb$ induced by
	$\big( \w \ox_{\eta(\w)} B, X^{\eta(\w), \w', \nu^{\eta(\w),\w}} \big)$
	under the Wiener measure $\P$.
	Therefore, for all bounded random variable $Y$ in $\Fc_{\eta}$ we have
	\begin{eqnarray*}
		\E^{\P} \left[ \Phi \left(X^{t,\xb,\nu}\right) Y\right]
		 &=&
		\int_{\Omb} {\left(\Phi(\w')Y(\w)\right)} \,\Pb^{t,\xb,\nu}(d\wb) \\
		&=&
		\int_{\Omb} {\left(\E^{\Pb^{t,\xb,\nu}_{\wb}}[\Phi(X)] ~Y(\w)\right)} \,\Pb^{t,\xb,\nu}(d\wb)\\
		&=&
		\int_{\Omb} {\left(\E^{\P}\left[\Phi\left(X^{\eta(\w),\w',\nu^{\eta(\w),\w}}\right)\right] Y(\w)\right)} \,\Pb^{t,\xb,\nu}(d\wb )\\
		&=&
		\int_{\Om} { \left( J \left(\eta(\om),X^{t,\xb,\nu}(\om),\nu^{\eta(\om),\om}\right) Y(\om) \right) } \,\P(d\om).
	\end{eqnarray*}
	Since $\eta=\tau~\P-\text{a.s.}$, we have completed the proof.
	\qed

\end{document}